\newtheorem{theorem}{Theorem}[section]
\newtheorem{lemma}{Lemma}[section]
\newtheorem{remark}{Remark}[section]
\newtheorem{corollary}{Corollary}[section]
\numberwithin{equation}{section}
\newcommand\blfootnote[1]{%
  \begingroup
  \renewcommand\thefootnote{}\footnote{#1}%
  \addtocounter{footnote}{-1}%
  \endgroup
}
\begin{document}
\title{Further subadditive matrix inequalities}
\author{I. H. G\"um\"u\c s, H. R. Moradi and M. Sababheh}
\maketitle
\blfootnote{\subjclass{Math. Sub. Class: Primary 47A63, Secondary 47A30, 39B62.}\\
\hspace{1cm}\keywords{Keywords: subadditive matrix inequalities, concave function.}}

\begin{abstract}
Matrix inequalities that extend certain scalar ones have been at the center of numerous researchers' attention. In this article, we explore the
celebrated subadditive inequality for matrices via concave functions and present a reversed version of this result. Our approach will tackle
concave function properties and some delicate manipulations of matrices and inner products. 
\end{abstract}

\pagestyle{myheadings} \markboth{\centerline {}}
{\centerline {}} \bigskip \bigskip 

\section{Introduction}

In 1999, Ando and Zhan proved that an operator monotone function $%
f:[0,\infty)\to [0,\infty)$ satisfies the subadditive inequality \cite{ando} 
\begin{equation}  \label{ando_zhan_intro}
|||f(A+B)|||\leq |||f(A)+f(B)|||,
\end{equation}
for all $n\times n$ positive semidefinite matrices $A,B$ (written $A,B\geq 0$%
) and any unitarily invariant norm $|||\cdot|||$ on the algebra $\mathcal{M}%
_n$ of all complex $n\times n$ matrices, with identity $I$.

In this context, a function $f:[0,\infty)\to [0,\infty)$ is said to be
operator monotone if it preserves the partial order among Hermitian
matrices. That is, if it satisfies $f(A)\leq f(B)$ whenever $A\leq B$ are
two Hermitian matrices. The partial order ``$\leq"$ among Hermitian matrices
is defined by 
\begin{equation*}
A\leq B ~\Leftrightarrow ~ B-A\geq 0.
\end{equation*}

It is quite interesting that a non-negative function $f$ defined on $%
[0,\infty)$ is operator monotone if and only if it is operator concave, in
the sense that for all $A,B\geq 0,$ 
\begin{align*}
f((1-t)A+tB)\geq (1-t)f(A)+tf(B),\;\forall\; 0\leq t\leq 1.
\end{align*}
Later, in 2007, Bourin and Uchiyama proved \eqref{ando_zhan_intro} for
concave functions; a condition that is much weaker than operator monotony
(or operator concavity), \cite{bourin}.

The motivation behind \eqref{ando_zhan_intro} is that a concave function $%
f:[0,\infty)\to [0,\infty)$  necessarily satisfies 
\begin{align}  \label{concave_sub_intro}
f(a+b)\leq f(a)+f(b),\quad a,b\in [0,\infty).
\end{align}
However, an operator concave version of \eqref{concave_sub_intro} is not
true. That is, an operator concave function $f$ does not necessarily satisfy 
\begin{align}  \label{oper_con_sub_intro}
f(A+B)\leq f(A)+f(B)
\end{align}
for the positive semidefinite matrices $A,B$.\newline
In \cite{mia_sub}, \eqref{oper_con_sub_intro} was discussed in details,
where additional assumptions were assumed to obtain different forms of such
inequalities.

Searching the literature, we find no mention for a reverse of %
\eqref{ando_zhan_intro}. Our second and
main goal of this article is to find a positive term $\Gamma$ such that for a concave function $f:[0,\infty)\to
[0,\infty)$, one has 
\begin{equation*}
|||f(A+B)|||+\Gamma \geq |||f(A)+f(B)|||
\end{equation*}
for all positive semidefinite matrices $A,B$ and any unitarily invariant
norm $|||\cdot|||.$ This will be done in Theorem \ref{thm_main_1} and
Corollary \ref{cor_reversed} below. However, due to the difficulty of the
problem, $\Gamma$ will not have an easy form!

Our approach to prove Theorem \ref{thm_main_1} will be a delicate treatment
of concave functions and inner product properties.

\section{Main Results}

In this section, we present our results, where we begin with the discussion
of concave functions inequalities, then we apply those results to
matrices.

Recall that a concave function is distinguished by the fact that its
position above its secants on the interval of concavity. However, if $%
f:[a,b]\to\mathbb{R}$ is concave, then one can easily see that the function $%
g(t)=f((1-t)a+tb)-((1-t)f(a)+tf(b))$ is concave on $[0,1]$. Consequently,
the graph of $g(t)$ is above its secants on $[0,1/2]$ and $[1/2,1].$ This
observation leads to the well known inequality \cite{mitroi,sab_mia,sab_mjom}
\begin{equation}  \label{eq_conc_ref_1}
\left( 1-t \right)f\left( a \right)+tf\left( b \right)\le f\left( \left( 1-t
\right)a+tb \right)+2r\left( \frac{f\left( a \right)+f\left( b \right)}{2}%
-f\left( \frac{a+b}{2} \right) \right),
\end{equation}
where $0\leq t\leq 1$ and $r=\min\{t,1-t\}.$\newline
Noting negativity of $\frac{f\left( a \right)+f\left( b \right)}{2}-f\left( 
\frac{a+b}{2} \right)$, we see how \eqref{eq_conc_ref_1} refines the
inequality $\left( 1-t \right)f\left( a \right)+tf\left( b \right)\le
f\left( \left( 1-t \right)a+tb \right)$ for concave functions. Manipulating
concave inequalities also lead to a reversed version as follows \cite%
{mitroi,sab_laa} 
\begin{equation}  \label{eq_rev_conc}
\left( 1-t \right)f\left( a \right)+tf\left( b \right)+2R\left( f\left( 
\frac{a+b}{2} \right)-\frac{f\left( a \right)+f\left( b \right)}{2}
\right)\ge f\left( \left( 1-t \right)a+tb \right),
\end{equation}
where $0\leq t\leq 1$ and $R=\max\{t,1-t\}.$

Our first result provides a refinement and a reverse for %
\eqref{concave_sub_intro}. The proof will use both \eqref{eq_conc_ref_1} and %
\eqref{eq_rev_conc}. As far as we know, this approach has never been tickled
in the literature.

\begin{theorem}
\label{1} Let $f:\left[ 0,\infty \right)\to \mathbb{R}$ be a concave
function with $f\left( 0 \right)=0$. Then for any $a,b\ge 0$, 
\begin{equation}  \label{2}
\begin{aligned} & 2\left( 1+\frac{\left| a-b \right|}{a+b} \right)\left(
\frac{f\left( a+b \right)}{2}-f\left( \frac{a+b}{2} \right) \right) \\ & \le
f\left( a+b \right)-\left( f\left( a \right)+f\left( b \right) \right) \\ &
\le 2\left( 1-\frac{\left| a-b \right|}{a+b} \right)\left( \frac{f\left( a+b
\right)}{2}-f\left( \frac{a+b}{2} \right) \right). \end{aligned}
\end{equation}
\end{theorem}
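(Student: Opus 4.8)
The plan is to reduce \eqref{2} to two applications of the scalar refinements \eqref{eq_conc_ref_1} and \eqref{eq_rev_conc}, carried out on the interval $[0,a+b]$ so that the hypothesis $f(0)=0$ can be exploited. By the symmetry of \eqref{2} under interchanging $a$ and $b$, I may assume $a\le b$; the degenerate case $a=b=0$ is trivial since every term vanishes, so I take $a+b>0$.

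First I would express both $a$ and $b$ as convex combinations of the endpoints $0$ and $a+b$, writing $a=(1-t_a)\cdot 0+t_a(a+b)$ and $b=(1-t_b)\cdot 0+t_b(a+b)$ with $t_a=\frac{a}{a+b}$ and $t_b=\frac{b}{a+b}$. The decisive feature is that $t_a+t_b=1$, so that $1-t_a=t_b$ and $1-t_b=t_a$. Consequently the ``$\min$'' weights appearing in \eqref{eq_conc_ref_1} coincide, $r_a=\min\{t_a,t_b\}=\frac{a}{a+b}=r_b$, and likewise the ``$\max$'' weights in \eqref{eq_rev_conc} coincide, $R_a=\max\{t_a,t_b\}=\frac{b}{a+b}=R_b$. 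Note also that $\frac{0+(a+b)}{2}=\frac{a+b}{2}$, so the midpoint terms in both refinements are precisely $f\big(\tfrac{a+b}{2}\big)$.

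For the upper bound I would invoke \eqref{eq_conc_ref_1} on $[0,a+b]$ once with $t=t_a$ and once with $t=t_b$; since $f(0)=0$, the two inequalities read
\[
\begin{aligned}
t_a f(a+b)&\le f(a)+2r_a\Big(\tfrac{f(a+b)}{2}-f\big(\tfrac{a+b}{2}\big)\Big),\\
t_b f(a+b)&\le f(b)+2r_b\Big(\tfrac{f(a+b)}{2}-f\big(\tfrac{a+b}{2}\big)\Big).
\end{aligned}
\]
Adding them and using $t_a+t_b=1$ collapses the left side to $f(a+b)$ and yields the coefficient $2(r_a+r_b)=\frac{4a}{a+b}=2\big(1-\frac{|a-b|}{a+b}\big)$, which after rearranging is exactly the claimed upper estimate. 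The lower bound follows in the identical fashion from \eqref{eq_rev_conc}: the same bookkeeping now produces $2(R_a+R_b)=\frac{4b}{a+b}=2\big(1+\frac{|a-b|}{a+b}\big)$.

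The only genuine subtlety, and the step I expect to be the main obstacle, is recognizing that the correct secants are those joining $0$ to $a+b$, so that $f(0)=0$ makes the weighted combination telescope into $f(a+b)$, rather than the more tempting interval $[a,b]$. Once this substitution is fixed, the identity $t_a+t_b=1$ forces the $\min$ and $\max$ weights to be constant across both applications, and the sign of $\tfrac{f(a+b)}{2}-f\big(\tfrac{a+b}{2}\big)$, which is nonpositive by concavity together with $f(0)=0$, confirms that the two displayed bounds are ordered as stated.
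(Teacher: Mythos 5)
Your proof is correct and follows essentially the same route as the paper's: both specialize \eqref{eq_conc_ref_1} and \eqref{eq_rev_conc} to the interval $[0,a+b]$ so that $f(0)=0$ kills the endpoint term, then apply the resulting estimate twice with the weights $t=\frac{a}{a+b}$ and $t=\frac{b}{a+b}$ and add, obtaining the coefficients $4r=2\bigl(1-\frac{|a-b|}{a+b}\bigr)$ and $4R=2\bigl(1+\frac{|a-b|}{a+b}\bigr)$. The only cosmetic difference is that the paper writes $f(a+b)=\frac{a}{a+b}f(a+b)+\frac{b}{a+b}f(a+b)$ before substituting, whereas you add the two inequalities directly; these are the same computation.
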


\begin{proof}
For $a,b\geq 0$ and $0\leq t\leq 1,$ \eqref{eq_rev_conc} implies  
\begin{equation*}
\left( 1-t \right)f\left( a \right)+tf\left( b \right)+2R\left( f\left( 
\frac{a+b}{2} \right)-\frac{f\left( a \right)+f\left( b \right)}{2}
\right)\ge f\left( \left( 1-t \right)a+tb \right).
\end{equation*}
where $R=\max \left\{ t,1-t \right\}$.\newline
Replacing $a$ by $0$ and $b$ by $x\geq 0,$ we have 
\begin{equation*}
f\left( tx \right)=f\left( tx+\left( 1-t \right)\cdot0 \right)\le \left( 1-t
\right)f\left( 0 \right)+tf\left( x \right)+2R\left( f\left( \frac{x}{2}
\right)-\frac{f\left( 0 \right)+f\left( x \right)}{2} \right).
\end{equation*}
Since $f\left( 0 \right)=0$, the above inequality implies  
\begin{equation*}
f\left( tx \right)\le tf\left( x \right)+2R\left( f\left( \frac{x}{2}
\right)-\frac{f\left( x \right)}{2} \right),
\end{equation*}
where $R=\max \left\{ t,1-t \right\}$ and $0\le t\le 1$.\newline
Applying this inequality twice implies  
\begin{equation*}
\begin{aligned}
   f\left( a+b \right)&=\frac{a}{a+b}f\left( a+b \right)+\frac{b}{a+b}f\left( a+b \right) \\ 
 & \ge f\left( \frac{a}{a+b}\cdot\left( a+b \right) \right)-2R\left( f\left( \frac{a+b}{2} \right)-\frac{f\left( a+b \right)}{2} \right) \\ 
 &\quad +f\left( \frac{b}{a+b}\cdot\left( a+b \right) \right)-2R\left( f\left( \frac{a+b}{2} \right)-\frac{f\left( a+b \right)}{2} \right) \\ 
 & =f\left( a \right)+f\left( b \right)-4R\left( f\left( \frac{a+b}{2} \right)-\frac{f\left( a+b \right)}{2} \right),  
\end{aligned}
\end{equation*}
where $R=\max \left\{ \frac{a}{a+b},\frac{b}{a+b} \right\}$.\newline
Consequently,  
\begin{equation*}
f\left( a \right)+f\left( b \right)\le f\left( a+b \right)+4R\left( f\left( 
\frac{a+b}{2} \right)-\frac{f\left( a+b \right)}{2} \right).
\end{equation*}
Noting that $R=\max \left\{ \frac{a}{a+b},\frac{b}{a+b} \right\}=\frac{%
a+b+\left| a-b \right|}{2\left( a+b \right)}$, we reach  
\begin{equation*}
f\left( a \right)+f\left( b \right)\le 2\left( 1+\frac{\left| a-b \right|}{%
a+b} \right)\left( f\left( \frac{a+b}{2} \right)-\frac{f\left( a+b \right)}{2%
} \right)+f\left( a+b \right),
\end{equation*}
which proves the first inequality in \eqref{2}.\newline
Now we shall prove the second inequality in \eqref{2}. From %
\eqref{eq_conc_ref_1}, we have 
\begin{equation*}
\left( 1-t \right)f\left( a \right)+tf\left( b \right)\le f\left( \left( 1-t
\right)a+tb \right)+2r\left( \frac{f\left( a \right)+f\left( b \right)}{2}%
-f\left( \frac{a+b}{2} \right) \right),
\end{equation*}
where $r=\min \left\{ t,1-t \right\}$.\newline
This implies, when $a=0,$ 
\begin{equation*}
tf\left( x \right)\le f\left( tx \right)+2r\left( \frac{f\left( x \right)}{2}%
-f\left( \frac{x}{2} \right) \right).
\end{equation*}
Consequently, 
\begin{equation*}
\begin{aligned}
   f\left( a+b \right)&=\frac{a}{a+b}f\left( a+b \right)+\frac{b}{a+b}f\left( a+b \right) \\ 
 & \le f\left( a \right)+2r\left( \frac{f\left( a+b \right)}{2}-f\left( \frac{a+b}{2} \right) \right) \\ 
 & +f\left( b \right)+2r\left( \frac{f\left( a+b \right)}{2}-f\left( \frac{a+b}{2} \right) \right) \\ 
 & =f\left( a \right)+f\left( b \right)+4r\left( \frac{f\left( a+b \right)}{2}-f\left( \frac{a+b}{2} \right) \right),  
\end{aligned}
\end{equation*}
where $r=\min \left\{ \frac{a}{a+b},\frac{b}{a+b} \right\}=\frac{a+b-\left|
a-b \right|}{2\left( a+b \right)}$. This completes the proof of the theorem.
\end{proof}

\begin{remark}
Notice that if $f:\left[ 0,\infty \right)\to \mathbb{R}$ is a concave
function with $f\left( 0 \right)=0$, then for any $a,b\ge 0$, 
\begin{equation*}
0\ge \frac{f\left( a+b \right)}{2}-f\left( \frac{a+b}{2} \right).
\end{equation*}
Since 
\begin{equation*}
\frac{f\left( a+b \right)}{2}\le \frac{f\left( a \right)+f\left( b \right)}{2%
}\le f\left( \frac{a+b}{2} \right),
\end{equation*}
where the first inequality follows from the subadditivity of concave
function and the second inequality follows directly from the definition of a
concave function.
\end{remark}

\begin{corollary}
Let $f:\left[ 0,\infty \right)\to \mathbb{R}$ be a concave function
satisfies $f\left( 0 \right)=0$. Then for any $a,b\ge 0$, 
\begin{equation*}
f\left( \frac{a+b}{2} \right)-\left( \frac{f\left( a \right)+f\left( b
\right)}{2} \right) \le \frac{\left| a-b \right|}{a+b}\left( f\left( \frac{%
a+b}{2} \right)-\frac{f\left( a+b \right)}{2} \right).
\end{equation*}
\end{corollary}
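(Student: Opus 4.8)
The plan is to derive this corollary directly from the upper bound already established in Theorem \ref{1}, so that no new information about the concave function $f$ is required—only an algebraic rearrangement. The key observation is that the quantity $f\left(\frac{a+b}{2}\right)-\frac{f(a)+f(b)}{2}$ that we wish to bound is, after cancelling the common term $f(a+b)$, precisely what one obtains by isolating $f(a)+f(b)$ in the second inequality of \eqref{2}.

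Concretely, I would start from the right-hand inequality in \eqref{2},
\[
f(a+b)-(f(a)+f(b)) \le 2\left(1-\frac{|a-b|}{a+b}\right)\left(\frac{f(a+b)}{2}-f\left(\frac{a+b}{2}\right)\right),
\]
expand the product on the right, and cancel the $f(a+b)$ occurring on both sides. This leaves a lower bound for $f(a)+f(b)$; dividing by $2$ and regrouping the terms involving $\frac{f(a+b)}{2}$ and $f\left(\frac{a+b}{2}\right)$ gives
\[
\frac{f(a)+f(b)}{2}\ge f\left(\frac{a+b}{2}\right)-\frac{|a-b|}{a+b}\left(f\left(\frac{a+b}{2}\right)-\frac{f(a+b)}{2}\right).
\]
Transposing so that $f\left(\frac{a+b}{2}\right)-\frac{f(a)+f(b)}{2}$ stands alone on the left then produces exactly the asserted inequality.

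Since the whole argument is a linear manipulation of an inequality already proved, there is no genuine obstacle; the only point requiring care is the bookkeeping of signs, since the factor $\frac{f(a+b)}{2}-f\left(\frac{a+b}{2}\right)$ is nonpositive, as recorded in the Remark. This sign does not reverse the direction of the final estimate—the rearrangement is an equivalence—but one should take care not to divide by this quantity at any stage. I also note that only the right-hand (upper) bound of \eqref{2} enters here; the companion lower bound, together with the concavity inequality $f\left(\frac{a+b}{2}\right)\ge\frac{f(a)+f(b)}{2}$, yields only the trivial complementary estimate and is not needed.
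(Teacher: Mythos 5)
Your proposal is correct: rearranging the second inequality of \eqref{2} as you describe does yield exactly the stated bound, and your observation that the first inequality of \eqref{2} only gives a trivial complementary estimate is also accurate. The paper states this corollary without proof as an immediate consequence of Theorem \ref{1}, and your derivation is precisely the intended (and essentially only) route.
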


In the sequel, we will present our applications of the above scalar inequalities to matrices. For this purpose, we will need the following well known lemma.
\begin{lemma}
\label{lem_con_inner}(\cite[p. 281]{bhatia}) If $f:J\to\mathbb{R}$ is concave and if 
$A\in\mathbb{M}_n$ is Hermitian with spectrum in $J$, then 
\begin{equation*}
\left<f(A)x,x\right>\leq f\left(\left<Ax,x\right>\right),
\end{equation*}
for all unit vectors $x\in\mathbb{C}^n.$
\end{lemma}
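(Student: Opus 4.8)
The plan is to reduce this matrix statement to the classical scalar Jensen inequality for the concave function $f$ by diagonalizing $A$. Since $A$ is Hermitian, the spectral theorem furnishes a unitary $U\in\mathbb{M}_n$ and real eigenvalues $\lambda_1,\dots,\lambda_n$, all lying in $J$ by the spectral hypothesis, such that $A=U^*DU$ with $D=\operatorname{diag}(\lambda_1,\dots,\lambda_n)$. The functional calculus then gives $f(A)=U^*f(D)U$ with $f(D)=\operatorname{diag}(f(\lambda_1),\dots,f(\lambda_n))$, so both sides of the desired inequality transfer cleanly to the diagonal picture after a unitary change of variables.

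First I would fix a unit vector $x\in\mathbb{C}^n$ and set $y=Ux$, which is again a unit vector because $U$ is unitary. Writing $y=(y_1,\dots,y_n)$ and putting $w_j=|y_j|^2$, the normalization $\|y\|=1$ becomes $\sum_{j=1}^n w_j=1$ with each $w_j\ge 0$; thus $(w_j)_j$ is a probability vector. Unwinding the two inner products through $U$ yields the identities $\langle Ax,x\rangle=\langle Dy,y\rangle=\sum_{j=1}^n\lambda_j w_j$ and $\langle f(A)x,x\rangle=\langle f(D)y,y\rangle=\sum_{j=1}^n f(\lambda_j)w_j$.

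With these two identities in hand, the assertion $\langle f(A)x,x\rangle\le f\bigl(\langle Ax,x\rangle\bigr)$ is precisely the inequality $\sum_{j=1}^n w_j f(\lambda_j)\le f\bigl(\sum_{j=1}^n w_j\lambda_j\bigr)$, which is Jensen's inequality for the concave function $f$ applied to the points $\lambda_j\in J$ with weights $w_j$. Here one uses that $J$, being the domain of a concave function, is an interval, so the convex combination $\sum_j w_j\lambda_j$ again lies in $J$ and $f$ is genuinely defined at the argument on the right-hand side.

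Since the argument is a direct translation into Jensen's inequality, I do not expect a substantive obstacle; the only points requiring care are bookkeeping — verifying that the spectrum lies in $J$ (granted by hypothesis) so that $f(A)$ is well defined through the functional calculus, and confirming that the weights $w_j=|y_j|^2$ really do sum to one, which is exactly where unitarity of $U$ (equivalently, preservation of the norm of $x$) is used.
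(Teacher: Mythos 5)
Your proof is correct and complete. The paper does not prove this lemma at all --- it is quoted directly from Bhatia's \emph{Matrix Analysis} with a citation --- and your argument (diagonalize $A$ via the spectral theorem, pass to the weights $w_j=|y_j|^2$, and invoke the scalar Jensen inequality for the concave $f$) is exactly the standard proof given in that reference, with all the relevant bookkeeping handled properly.
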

As an application of Theorem \ref{1}, we have the following reversed version
of the celebrated subadditive inequality \eqref{ando_zhan_intro} for concave
functions. For the next two main results, we adopt the notations
$\lambda_{\min}(X)$ and $\lambda_{\max}(X)$ to denote the least and largest eigenvalues of the Hermitian matrix $X\in\mathcal{M}_n$, respectively. 

In the following lemma, we present the reversed version of \eqref{ando_zhan_intro} for the usual operator norm. The unitarily invariant norm version is shown then.

\begin{lemma}
\label{thm_main_1} Let $A,B\in \mathcal{M}_n$ be two positive semidefinite
matrices and let $f:[0,\infty)\to [0,\infty)$ be a concave function, with $%
f(0)=0$. Then 
\begin{equation*}
\begin{aligned}
   \left\| f\left( A \right)+f\left( B \right) \right\|&\le  \frac{\alpha+\beta}{\alpha} \left(2f\left(\frac{\eta}{2}\right)-f(\alpha)\right) +\left\| f\left( A+B \right) \right\|, 
\end{aligned}
\end{equation*}
$\alpha=\lambda_{\min}(A+B), \beta=\lambda_{\max}(|A-B|), \eta=\lambda_{\max}(A+B)$ and  $\|\cdot\|$ is the usual operator norm.
\end{lemma}

\begin{proof}
If $\|x\|=1$, we have $\left\langle \left( A+B \right)x,x \right\rangle \geq \alpha$. Now since $f:[0,\infty)\to [0,\infty)$ is concave with $f(0)=0,$ it follows that $f$ is increasing. Consequently, 
$ -f\left(
\left\langle \left( A+B \right)x,x \right\rangle \right)\le -f(\alpha).
$ This together with the fact that $f$ is increasing imply 
\begin{align}
& 2\left( 1+\frac{\left| \left\langle \left( A-B \right)x,x \right\rangle
\right|}{\left\langle \left( A+B \right)x,x \right\rangle } \right)\left(
f\left( \left\langle \left( \frac{A+B}{2} \right)x,x \right\rangle \right)-%
\frac{f\left( \left\langle \left( A+B \right)x,x \right\rangle \right)}{2}
\right)  \notag \\
& \le \left( 1+\frac{\beta}{\alpha} \right)\left( 2 f\left( \frac{\eta}{2}
\right) -f(\alpha) \right).  \label{needed_1}
\end{align}

Consequently, by applying Theorem \ref{1}, with $a=\left<Ax,x\right>$ and $%
b=\left<Bx,x\right>,$ we have 
\begin{equation*}
\begin{aligned}
  & \left\langle f\left( A \right)+f\left( B \right)x,x \right\rangle  \\ 
 & =\left\langle f\left( A \right)x,x \right\rangle +\left\langle f\left( B \right)x,x \right\rangle  \\ 
 & \le f\left( \left\langle Ax,x \right\rangle  \right)+f\left( \left\langle Bx,x \right\rangle  \right)\hspace{0.2cm}(\text{by Lemma \ref{lem_con_inner}}) \\ 
 & \le 2\left( 1+\frac{\left| \left\langle \left( A-B \right)x,x \right\rangle  \right|}{\left\langle \left( A+B \right)x,x \right\rangle } \right)\left( f\left( \left\langle \left( \frac{A+B}{2} \right)x,x \right\rangle  \right)-\frac{f\left( \left\langle \left( A+B \right)x,x \right\rangle  \right)}{2} \right) \\ 
 &\quad +f\left( \left\langle \left( A+B \right)x,x \right\rangle  \right)\hspace{0.2cm}({\text{by Theorem \ref{1}}}) \\ 
 & \le \left( 1+\frac{\beta}{\alpha} \right)\left( 2 f\left( \frac{\eta}{2}
\right) -f(\alpha) \right) +f\left( \left\langle \left( A+B \right)x,x \right\rangle  \right)\hspace{0.2cm}({\text{by \eqref{needed_1}}}).  
\end{aligned}
\end{equation*}
This implies  
\begin{equation*}
\begin{aligned}
  & \left\langle f\left( A \right)+f\left( B \right)x,x \right\rangle   
 & \le \frac{\alpha+\beta}{\alpha} \left( 2 f\left( \frac{\eta}{2}
\right) -f(\alpha) \right)+f\left( \left\langle \left( A+B \right)x,x \right\rangle  \right), 
\end{aligned}
\end{equation*}
for any unit vector $x\in\mathbb{C}^n$.
Now, by taking supremum over unit vector $x$, and recalling that $f$ is
increasing, we obtain the desired inequality.
\end{proof}

Now we are ready to present the main result in this article, where we show the  unitarily invariant norm version of \eqref{ando_zhan_intro}. In the proof, we will need the following basic lemma \cite[Problem 1.6. 15]{bhatia}.

\begin{lemma}\label{max_orth}
Let $A\in\mathcal{M}_n$ be Hermitian and let $\lambda_1(A)\geq \lambda_2(A)\geq\cdots\geq\lambda_n(A)$ denote all eigenvalues of $A$, counting multiplicities. Then, for $1\leq k\leq n,$
$$
\sum_{i=1}^{k}\lambda_i(A)=\max\sum_{i=1}^{k}\left<Ax_i,x_i\right>,
$$
where the maximum is taken over all sets of $k$ orthogonal vectors $x_1,\cdots,x_k$ in $\mathbb{C}^n$.
\end{lemma}

\begin{theorem}
\label{cor_reversed} Let $A,B\in \mathcal{M}_n$ be two positive matrices
and let $f:[0,\infty)\to [0,\infty)$ be a concave function, with $f(0)=0$.
If  $|||\cdot|||$ is a  unitarily invariant norm on $\mathcal{M}_n$ normalized so that $||| I |||=1$, then
\begin{equation*}
\begin{aligned}
   \left|\left|\left| f\left( A \right)+f\left( B \right) \right|\right|\right|&\le \frac{\alpha+\beta}{\alpha} \left(2f\left(\frac{\eta}{2}\right)-f(\alpha)\right)+\left|\left|\left| f\left( A+B \right) \right|\right|\right|,  
\end{aligned}
\end{equation*}
where $\alpha, \beta$ and $\eta$ are as in Lemma \ref{thm_main_1}.
\end{theorem}

\begin{proof}

Let $x_1,x_2,\cdots,x_n$ be unit eigenvectors corresponding to the
eigenvalues $\lambda_1\geq \lambda_2\geq \cdots\geq \lambda_n$ of $f(A)+f(B)$%
. For simplicity, let
$$\gamma=\frac{\alpha+\beta}{\alpha} \left(2f\left(\frac{\eta}{2}\right)-f(\alpha)\right).$$
Then, for $1\leq k\leq n$, 
\begin{align*}
\sum_{i=1}^{k}\lambda_i(f(A)+f(B))&=\sum_{i=1}^{k}\left<(f(A)+f(B))x_i,x_i%
\right> \\
&\leq \sum_{i=1}^{k} \left<(\gamma I+f(A+B))x_i,x_i\right> \;({\text{by\;Lemma}}\;\ref{thm_main_1})\\
&\leq \sum_{i=1}^{k}\lambda_i(\gamma I+f(A+B))\;({\text{by\;Lemma}}\;\ref{max_orth}).
\end{align*}
Now, since $A$ and $B$ are positive and $f:[0,\infty)\to[0,\infty)$, we have 
\begin{align*}
|||f(A)+f(B)|||_{(k)}\leq ||| \gamma I+ f(A+B) |||_{(k)},
\end{align*}
where $|||\cdot|||_{(k)}$ denotes the ky-Fan norms. From this, it follows
that (see \cite[Theorem IV.2.2, p. 93]{bhatia})  
\begin{align*}
|||f(A)+f(B)|||\leq ||| \gamma I+ f(A+B) |||,
\end{align*}
for any unitarily invariant norm $|||\cdot|||.$ But this latter inequality
implies that  
\begin{align*}
|||f(A)+f(B)|||\leq\gamma||| I ||| +||| f(A+B) |||,
\end{align*}
which completes the proof.
\end{proof}

\vspace{1cm}

{\tiny (I.H. G\"um\"u\c s) Department of Mathematics, Faculty of Arts and Sciences, Ad\i yaman University, Ad\i yaman, Turkey.}

{\tiny \textit{E-mail address:} igumus@adiyaman.edu.tr}

{\tiny \vskip 0.3 true cm }

{\tiny {(H.R. Moradi) Department of Mathematics, Payame Noor University (PNU), P.O. Box 19395-4697, Tehran, Iran.}}

{\tiny \textit{E-mail address:} hrmoradi@mshdiau.ac.ir}

{\tiny \vskip 0.3 true cm }

{\tiny (M. Sababheh) Department of Basic Sciences, Princess Sumaya University for Technology, Al Jubaiha, Amman, Jordan.}

{\tiny \textit{E-mail address:} sababheh@psut.edu.jo; sababheh@yahoo.com}


\end{document}